\documentclass[a4paper,12pt,oneside,reqno]{amsart}
\usepackage{amssymb,amsfonts,amsmath,amsthm,amscd}
\usepackage{graphicx, color}
\usepackage[normalem]{ulem}
\setlength{\textwidth}{16cm}
\addtolength{\oddsidemargin}{-1.5cm}
\addtolength{\evensidemargin}{-1cm}     
\hoffset2.5pt

\thispagestyle{empty}
\numberwithin{equation}{section}  

\newcommand{\beq}{\begin{equation}} 
\newcommand{\eeq}{\end{equation}} 
\newcommand{\bea}{\begin{aligned}}
\newcommand{\eea}{\end{aligned}}
\newcommand{\bdm}{\begin{displaymath}}
\newcommand{\edm}{\end{displaymath}}
\newcommand{\barr}{\begin{array}}
\newcommand{\earr}{\end{array}}
\newcommand{\ben}{\begin{enumerate}}
\newcommand{\een}{\end{enumerate}}
\newcommand{\bde}{\begin{description}}
\newcommand{\ede}{\end{description}}

\newtheorem{teor}{Theorem}
\newtheorem{prop}[teor]{Proposition}  
\newtheorem{lem}[teor]{Lemma}

\newcommand{\R}{\mathbb{R}}

\newcommand{\N}{\mathbb{N}}

\newcommand{\PP}{\mathbb{P}}
\newcommand{\E}{{\mathbb{E}}}

\newcommand{\defi}{\equiv} 

\newcommand{\be}{\beta}

\newcommand{\de}{\delta}

\newcommand{\s}{\sigma}

\begin{document}
\title[From REM to BRW]{From Derrida's random energy model to\\ 
branching random walks: from 1 to 3}
\author[Nicola Kistler]{Nicola Kistler}            
 \address{Nicola Kistler,
J.W. Goethe-Universit\"at
Frankfurt, Robert-Mayer-Str. 10, DE - 60325 Frankfurt.}
\email{kistler@math.uni-frankfurt.de}
\author[Marius Schmidt]{Marius A. Schmidt}            
 \address{Marius A. Schmidt, 
J.W. Goethe-Universit\"at Frankfurt, Robert-Mayer-Str. 10, DE - 60325 Frankfurt.}
\email{mschmidt@math.uni-frankfurt.de}

\begin{abstract}
We study the extremes of a class of Gaussian fields with in-built hierarchical structure. The number of scales in the underlying trees depends on a parameter  $\alpha \in \left[0,1\right]$: choosing $\alpha=0$ yields 
the random energy model by Derrida (REM), whereas $\alpha=1$ corresponds to the branching random walk (BRW). When the parameter $\alpha$ increases, the level of the maximum of the field decreases smoothly from the REM- to  the BRW-value. However, as long as $\alpha<1$ strictly, the limiting extremal process is always Poissonian. 
\end{abstract}

\subjclass[2000]{60J80, 60G70, 82B44} \keywords{extreme value theory, extremal process, Gaussian hierarchical fields}

\date{\today}

\maketitle

\section{Introduction and main result} 
The Gaussian fields we consider are constructed as follows. Let $\alpha \in \left[0,1\right]$ and $N\in \N$. We refer to the parameter $N$ as the {\it size of the system}. For $j=1 \dots N^\alpha$ and $\s_j = 1 \dots \exp\left(N^{1-\alpha} \log 2\right)$, consider the vectors $\s = \left(\s_1, \dots, \s_{N^\alpha}\right)$. (We assume, without loss of generality, that $N$ and $\alpha$ are such that $N^\alpha$ and $N^{1-\alpha}$ are both integers). We refer to the indices $j= 1 \dots N^\alpha$ as {\it scales}, and to the labels $\s$ as {\it configurations}. The space of configurations is denoted by $\Sigma_N^{\left(\alpha\right)}$. Remark that, by construction, $\sharp \Sigma_N^{\left(\alpha\right)} = 2^N$. For scales $j\leq N^\alpha$ and $\left(\s_1, \dots, \s_j\right)$, consider independent centered Gaussian random variables $X_{\s_1, \dots, \s_j}^{\left(\alpha, j\right)}$ with variance $N^{1-\alpha}$ defined on some common probability space $\left(\Omega, \mathcal F, \PP\right)$. To given configuration $\s \in \Sigma_N^{\left(\alpha\right)}$ we associate the {\it energies} 
\beq \label{rf}
X_\s^{\left(\alpha, N\right)} \defi \sum_{j=1}^{N^\alpha} X_{\s_1, \dots, \s_j}^{\left(\alpha, j\right)}
\eeq
The collection $X^{\left(\alpha, N\right)} \defi \left\{ X_\s^{\left(\alpha, N\right)},  \s \in \Sigma_N^{\left(\alpha\right)} \right\}$ defines a centered Gaussian field with
\[
\text{var}\left[ X_\s^{\left(\alpha, N\right)} \right] = N, \quad \text{and}\quad \text{cov}\left[X_\s^{\left(\alpha, N\right)}, X_\tau^{\left(\alpha, N\right)}\right] = \left(\s \wedge \tau\right) N^{1-\alpha},
\] 
where $\s\wedge \tau \defi \inf\left\{j \leq N^{\alpha}: \left(\s_1, \dots, \s_{j}\right) = \left(\tau_1, \dots, \tau_j\right) \; \text{and}\; \s_{j+1} \neq \tau_{j+1} \right\}$. In spin glass terminology, $\s\wedge \tau$ is the {\it overlap} of the configurations $\s$ and $\tau$. In other words, the Gaussian field $X^{\left(\alpha, N\right)}$ is {\it hierarchically} correlated.  The parameter $\alpha$ governs the number of scales in the underlying "trees".  The choice $\alpha=0$ yields the celebrated REM of Derrida \cite{Derrida_REM}; in this case the tree consists of a single scale (only for this boundary case is the field uncorrelated). The choice $\alpha=1$ yields the (classical) BRW, also known as the directed polymer on Cayley trees \cite{derrida_spohn}: in this model, the number of scales grows linearly with the size of the system. In this sense, the fields $X^{\left(\alpha, N\right)}$  interpolate between REM and BRW (remark that these boundary cases are, within our class, the least resp. the most correlated fields). 
See Figure \ref{figone} below for a graphical representation.
\begin{figure}[h]
    \centering
\includegraphics[scale=0.35]{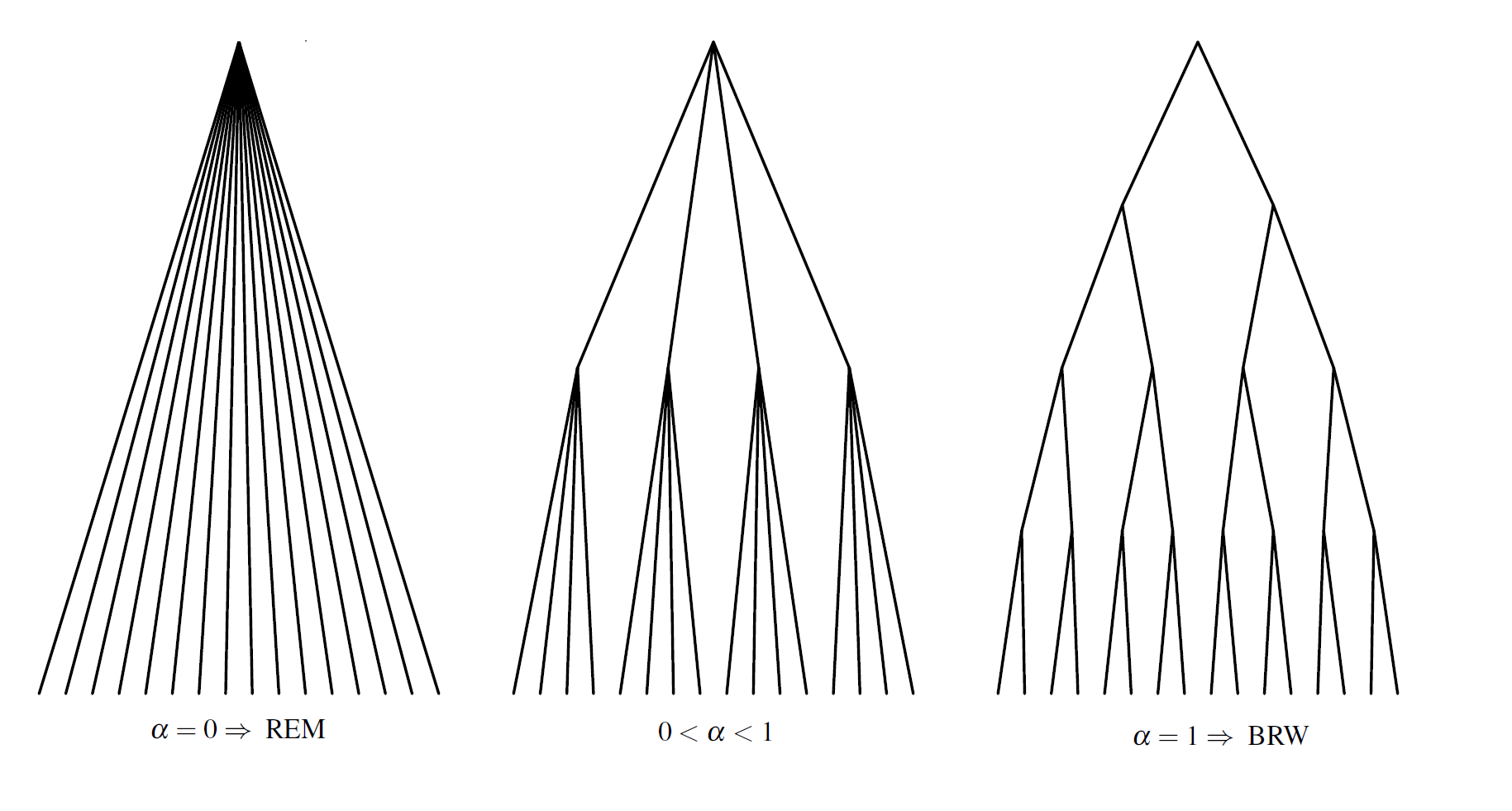}
    \caption{Trees interpolating between REM and BRW}
    \label{figone}
\end{figure}

\begin{center}

\end{center}

A fundamental question in the study of random fields concerns the behavior of the extreme values in the limit of large system-size. The case of independent random variables is simple, and completely understood, see e.g. the classic \cite{Leadbetter}. On the other hand, the study of the extremes of {\it correlated} random fields is a much harder question.  There is good reason to develop an extreme value theory for  Gaussian fields defined on trees: besides being typically amenable to a detailed analysis (see e.g. \cite{aidekon_et_al, abk_three, bovier_hartung, bovier_hartung_two, bovier_hartung_three, BovierKurkova_I, fang_zeitouni, madaule}),  Gaussian hierarchical fields should be some sort of  "universal attractors" in the limit of large system-size; this claim is a major pillar of the Parisi theory \cite{parisi} which has remained to these days rather elusive (see however \cite{kistler} and references therein for some recent advances). Our main result provides a characterization of the weak limit of the extremes of the hierarchical field \eqref{rf}.

\begin{teor} \label{main} Assume $\alpha \in \left[0, 1\right)$. Let 
\[ 
a_N^{\left(\alpha\right)} \defi \be_c N - \frac{1+2\alpha}{2 \be_c} \log N, \quad \text{where} \;\; \be_c \defi \sqrt{2\log 2},
\]
and consider the random Radon measure on the real line
\[
\Xi_N^{\left(\alpha\right)} \defi \sum_{\s \in \Sigma_N^{\left(\alpha\right)}} \de_{X_\s^{\left(\alpha, N\right)} - a_N^{\left(\alpha\right)}}\,.
\]
Then  $\;\Xi_N^{\left(\alpha\right)}$ converges weakly to a Poisson point process $\Xi$ of intensity $\mu(A) \defi \int_A  e^{-\be_c x} dx/\sqrt{2\pi}$. 
\end{teor}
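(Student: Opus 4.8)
\medskip
\noindent\emph{Proof plan.} The plan is to verify the two hypotheses of Kallenberg's criterion for convergence to a Poisson point process: for a convergence--determining class of test sets $A$ --- finite unions of bounded intervals suffice, since $\mu$ is locally finite on $\R$ (although $\mu((-\infty,t))=\infty$) --- it is enough to show
\beq\label{plan:kall}
\E\big[\Xi_N^{(\al)}(A)\big]\longrightarrow \mu(A)\qquad\text{and}\qquad \PP\big(\Xi_N^{(\al)}(A)=0\big)\longrightarrow e^{-\mu(A)},
\eeq
or, equivalently, to prove convergence of the Laplace functionals $\E\exp(-\int f\,d\Xi_N^{(\al)})\to\exp(-\int(1-e^{-f})\,d\mu)$ for $f\ge 0$ continuous with support bounded from below. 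The first difficulty shows up at once: the \emph{naive} first moment \emph{diverges}. Indeed, the precise Gaussian tail asymptotics, together with the choice of $a_N^{(\al)}$ and of $\mu$, give for any bounded interval $A$
\beq\label{plan:div}
\E\big[\Xi_N^{(\al)}(A)\big]=2^N\,\PP\big(X_\s^{(\al,N)}-a_N^{(\al)}\in A\big)\sim N^{\al}\,\mu(A),
\eeq
the extra factor $N^{\al}$ coming from configurations whose partial energies $S_j(\s):=\sum_{i\le j}X^{(\al,i)}_{\s_1,\dots,\s_i}$ overshoot the linear profile $\be_c\,j\,N^{1-\al}$ at some intermediate scale $j<N^{\al}$ and are pulled back down near the top. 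One therefore works instead with the \emph{localized} measure
\beq\label{plan:loc}
\widetilde\Xi_N^{(\al)}:=\sum_{\s\in\Sigma_N^{(\al)}}\mathbf{1}\big\{S_j(\s)\le\gamma_j\ \text{for all}\ 1\le j<N^{\al}\big\}\,\de_{X_\s^{(\al,N)}-a_N^{(\al)}},\qquad \gamma_j:=\be_c\,j\,N^{1-\al}-\psi_j,
\eeq
where $\psi_j\ge 0$ is a concave barrier correction, vanishing near the endpoints $j=1$ and $j=N^\al-1$, of moderate size ($o(\sqrt{j\,N^{1-\al}})$) in between, and to be tuned below.

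Granting this, I would carry out three steps. \emph{(i) Truncated first moment.} Show $\E[\widetilde\Xi_N^{(\al)}(A)]\to\mu(A)$. Viewing the indicator as constraining a Gaussian walk of $N^\al$ steps (each of variance $N^{1-\al}$) below $\gamma_\bullet$, this is a ballot estimate for a Gaussian bridge pinned near its endpoint; the concavity of $\psi_\bullet$ costs only the right polynomial factor and leaves the value $\mu(A)$ from \eqref{plan:div}--\eqref{plan:loc} unchanged. \emph{(ii) The truncation is harmless at the top.} Show $\PP(\exists\,\s:\ X_\s^{(\al,N)}>a_N^{(\al)}+t,\ S_j(\s)>\gamma_j\ \text{for some}\ j<N^\al)\to 0$. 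Decomposing over the first scale $j$ at which the barrier is crossed and using a first--moment (many--to--one) bound, this reduces to a sum over $j$ of Gaussian ballot integrals; here the concave bump $\psi_\bullet$ is designed precisely to generate enough decay to beat both the surplus $N^\al$ in \eqref{plan:div} and the sum over $j$. Since $\widetilde\Xi_N^{(\al)}\le\Xi_N^{(\al)}$ always, (i) and (ii) give $\PP(\Xi_N^{(\al)}((t,\infty))=\widetilde\Xi_N^{(\al)}((t,\infty)))\to1$ at continuity points $t$, so it suffices to prove \eqref{plan:kall} for $\widetilde\Xi_N^{(\al)}$. \emph{(iii) Poissonian statistics.} By the factorial--moment method (or via avoidance probabilities) it is enough to show $\E[\widetilde\Xi_N^{(\al)}(A)(\widetilde\Xi_N^{(\al)}(A)-1)]\to\mu(A)^2$ and the higher analogues, i.e.\ that pairs of localized configurations with overlap $\s\wedge\tau=q\ge 1$ are negligible. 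There are $\approx 2^{2N}2^{-qN^{1-\al}}$ such ordered pairs, and conditionally on the shared stretch $(\s_1,\dots,\s_q)$ the two branches are independent hierarchical fields of $N^\al-q$ scales; localization forces $S_q(\s)\le\gamma_q$, and a ballot estimate on the \emph{shared} sub--field --- itself a hierarchical field of the same type, whose own centering therefore carries the correction $\tfrac{1}{2\be_c}\big(\log(qN^{1-\al})+2\log q\big)$, i.e.\ an extra $\tfrac{\log q}{\be_c}$ beyond the pure--REM value --- bounds the overlap--$q$ contribution by $\lesssim q^{-3/2}N^{-(1-\al)/2}$. Summing, $\sum_{q\ge 1}q^{-3/2}N^{-(1-\al)/2}\lesssim N^{-(1-\al)/2}\to 0$ precisely because $\al<1$: each shared scale carries the macroscopic variance $N^{1-\al}\to\infty$, so two near--maximal configurations cannot split off cheaply near the end and no decoration survives --- in sharp contrast with $\al=1$, where the increments are $O(1)$ and the limit is a genuinely clustered (decorated) process.

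The step I expect to be the main obstacle is the calibration underlying (i)--(ii): one must make the barrier $\gamma_\bullet$ concave enough that the bad event in (ii) is killed --- a purely \emph{linear} barrier already fails, for it makes the count in (ii) comparable to the divergent first moment \eqref{plan:div} --- yet flat enough near the top that the legitimate first moment in (i) is not deflated. This is the familiar ballot/entropic--repulsion tension from branching random walk, run here over $N^\al$ scales with step variance $N^{1-\al}$. The companion delicate point, inside (iii), is the control of the ``cluster--candidate'' pairs with overlap $q$ of order $N^\al$ (correlation $\to 1$), for which the shared sub--field is macroscopic and the argument above must be run uniformly in $q$; it is there, as $\al\uparrow 1$, that the constants become tight. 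Everything else --- the exact Gaussian tail computations producing $a_N^{(\al)}$ and the density $e^{-\be_c x}/\sqrt{2\pi}$, and the boundary case $\al=0$, which is Derrida's REM and requires neither barrier nor overlap bound --- is routine. A clean way to organize (i) and (iii) is an induction on the number of scales, invoking the statement of the theorem for the shared sub--fields; alternatively one may set up a multiscale second--moment scheme, coarse--graining the $N^\al$ scales into $K\to\infty$ blocks and truncating at each block boundary.
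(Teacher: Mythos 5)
Your overall architecture (divergent naive first moment, localization below a barrier along the line $\beta_c k N^{1-\alpha}$, ballot/bridge estimate producing the compensating factor $N^{-\alpha}$, overlap-pair estimate, Kallenberg) is the same as the paper's. The genuine gap is in your step (ii), the claim that the truncation is harmless. You propose to bound $\PP\big(\exists\,\sigma:\ X_\sigma>a_N+t,\ S_j(\sigma)>\gamma_j \text{ for some } j\big)$ by a many-to-one/first-moment bound, decomposed over the first crossing scale, with the concave bump $\psi$ "generating enough decay to beat the surplus $N^\alpha$". This cannot work: since your barrier $\gamma_j=\beta_c jN^{1-\alpha}-\psi_j$ lies \emph{below} the line, the expected number of extremal configurations that cross it somewhere is precisely the divergent surplus you yourself computed, namely of order $N^{\alpha}\mu(A)$ (it is the difference between the unrestricted and the restricted first moments), so Markov's inequality gives nothing; and enlarging $\psi$ only lowers the barrier and makes this expectation bigger, not smaller. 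A positive bump placed \emph{above} the line can be beaten by a first moment, a negative one cannot. The event is nevertheless true — it holds because the violating configurations come in huge correlated clusters carried by a vanishing-probability event — but proving it needs a different mechanism. The paper's route is two-step: first an auxiliary \emph{upper} barrier $U_N(k)=\beta_c k N^{1-\alpha}+\ln N$ which, by a union bound over all $e^{jN^{1-\alpha}\ln 2}$ path segments at each scale (this is where the $+\ln N$ and the inequality $\beta_c>\tfrac{3\alpha-1}{2}$ enter), is crossed by \emph{no} path at all with probability tending to one (Lemma \ref{uenvelope}); second, the passage from $U_N$ down to the true barrier $E_N$ is made via the elementary bound $\PP(\Xi_{N,U_N}(A)\neq\Xi_{N,E_N}(A))\le \E[\Xi_{N,U_N}(A)]-\E[\Xi_{N,E_N}(A)]$, which vanishes because \emph{both} restricted means converge to $\mu(A)$ (Lemma \ref{eenvelope} together with Proposition \ref{fstmoment}). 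Some such intermediate above-the-line barrier, or an equivalent expectation-difference argument, is indispensable; as it stands your (ii) reproduces exactly the correlation-blind first-moment trap that makes the naive computation diverge.

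Two smaller remarks. In (i), your tolerance $\psi_j=o(\sqrt{jN^{1-\alpha}})$ is more generous than needed and would require a genuine entropic-repulsion estimate to keep the constant equal to $1/n$; the paper sidesteps this by taking a flat drop $N^{\gamma}$ with $\gamma<\tfrac{1-\alpha}{2}$, i.e.\ $o(N^{(1-\alpha)/2})$ uniformly, for which a simple perturbation of the exact cyclic-permutation ballot identity $\PP(\text{bridge}\le 0)=1/n$ suffices. In (iii), you aim at all factorial moments with a refined $q^{-3/2}N^{-(1-\alpha)/2}$ bound per overlap; the paper instead needs only the single pair estimate (expected number of barrier-respecting extremal pairs with nontrivial overlap is $o(1)$, driven by the $e^{-\beta_c N^{\gamma}}$ gain from the barrier drop at the branching scale) because it concludes via the Chen--Stein bound on the total-variation distance to the Poisson law, which converts mean convergence plus the pair estimate directly into convergence of avoidance functions. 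Your finer overlap asymptotics are plausible but unproven as stated, and are not needed once the Chen--Stein route is taken.
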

Apart from the case $\alpha=0$, the picture depicted in Theorem \ref{main} seems to be new. There is good reason to leave out the case $\alpha=1$: to clarify this, and to shed further light on our main result, let us spend a few words. First, the theorem implies that $a_N^{\left(\alpha\right)}$ is the {\it level of the maximum} of the random field $X^{\left(\alpha, N\right)}$, and $\Xi_N^{\left(\alpha\right)}$ is then the {\it extremal process}. It steadily follows from the convergence of the extremal process that the maximum of the field, recentered by its level, weakly converges to a Gumbel distribution. As expected under the light of (say) Slepian's Lemma, the level of the maximum {\it decreases} when $\alpha$
(hence the amount of correlations) {\it increases}. However, this feature is only detectable at the level of the second order, logarithmic corrections; curiously, the pre-factor $1+2\alpha$ interpolates smoothly between the 
REM- and the BRW-values ("from 1 to 3"). Notwithstanding, as long as $\alpha< 1$ strictly, and in spite of what might look at first sight as severe correlations, all our models fall into the universality class of the REM, which is indeed characterized by convergence towards Poissonian extremal processes. In the boundary case of the BRW, the picture is only partially correct: the logarithmic correction is still given by $a_N^{\left(\alpha\right)}$ with $\alpha=1$, see \cite{addario, aidekon, bramson}, yet the weak limit of the maximum is no longer a Gumbel distribution \cite{lalley_sellke}, nor is the limiting extremal process a simple Poisson process \cite{aidekon_et_al, abk_three, brunet_derrida_two, madaule}. \\

We conclude this section with a sketch of the proof of our main result. A natural approach would be to choose $a_N^{(\alpha)}$ such that the expected number of extremal configurations in any given compact $A\subset \R$ is of order one in the large $N$-limit. However, with the level of the maximum as given by Theorem \ref{main},  classical Gaussian estimates steadily yield
\[
\E\left[ \Xi_N^{(\alpha)}(A) \right] = 2^N \int_A \exp\left[ - \left( x - a_N^{(\alpha)} \right)^2/ (2N) \right] \frac{dx}{\sqrt{2\pi N}} =  N^\alpha(1+o(1)) \qquad (N\to \infty), 
\]
which is exploding as soon as $\alpha > 0$ strictly. The reason for this is easily identified: by linearity of the expectation, we are completely omitting correlations, but these turn out to be strong enough to affect the level of the maximum. To overcome this problem, we rely on the multi-scale analysis which has emerged in the study of the extremes of branching Brownian motion (see e.g. \cite{kistler}). To formalize, we need some notation. First, for a given $\s \in \Sigma_N^{(\alpha)}$, we refer to the process 
\[
S^\s= (S^{\sigma}_k, k \leq N^\alpha), \quad S^{\sigma}_k \defi \sum\limits_{j\leq k} X_{\sigma_1,..\sigma_j}^{(\alpha, j)},
\] 
as the {\it path} of a configuration. (The process $S^\s$ is a random walk with Gaussian increments, i.e. a discrete Brownian motion). We refer to any function $F_N:\,  \{0\dots N^\alpha\} \to \R$,  $k \mapsto F_N(k),$ as {\it barrier}. Given a barrier $F_N$, we denote by 
$$ \Xi_{N, F_N}^{\left(\alpha\right)} \defi \sum\limits_{\sigma\in\Sigma_N}\delta_{X_\sigma^{(\alpha, N)}-a_N^{(\alpha)}}\mathbf{1}_{\left\{ S^\sigma_k \leq F_N(k) \; \text{for all}\;  k\in\left\{1,..,N^\alpha\right\} \right\}} $$
the {\it modified (extremal) process}. A key step in the proof is to identify a barrier $E_N$, see \eqref{barri} below for its explicit form, such that for any compact $A\subset \R$,
\beq \label{claim_equality}
\lim_{N\to \infty} \PP\left[ \Xi_{N}^{(\alpha)}(A) = \Xi_{N, E_N}^{(\alpha)}(A)  \right] = 1.
\eeq
This naturally entails that the weak limit of the extremal process and that of the modified process must coincide (provided one of the two exists). We will thus focus our attention on the modified process $\Xi_{N, E_N}^{(\alpha)}$, thereby proving that {\it mean of the process} as well as its {\it avoidance functions} converge to the Poissonian limit as given by Theorem \ref{main}, to wit:
\beq \label{claim_mean}
 \lim_{N\to \infty} \E\left[ \Xi_{N, E_N}^{(\alpha)}(A) \right] = \mu(A)  \qquad \qquad \text{(Convergence of mean)}
\eeq
and
\beq \label{claim_avoidance}
\lim_{N\to \infty} \PP\left( \Xi_{N, E_N}^{\left(\alpha\right)}\left(A\right) = 0 \right) = \PP\left( \Xi(A) = 0 \right) \qquad \text{(Avoidance functions)}
\eeq
By \eqref{claim_mean} and \eqref{claim_avoidance}, it follows  from Kallenberg's theorem on Poissonian convergence \cite{kallenberg}, that the modified process weakly converges to the Poisson point process $\Xi$; but by \eqref{claim_equality}, the same must be true for the extremal process, settling the proof of Theorem \ref{main}. \\ 

The rest of the paper is devoted to the proof of \eqref{claim_equality}, \eqref{claim_mean} and \eqref{claim_avoidance}. Since $\alpha \in [0,1)$ is fixed throughout, we lighten notations by dropping 
the $\alpha$-dependence whenever no confusion can possibly arise, writing e.g. $\Sigma_N$ for $\Sigma_N^{(\alpha)}$,  $X_\s$ for $X_\s^{(\alpha, N)}$, $a_N$ for $a_N^{(\alpha)}$, etc.  \\

\noindent{\bf Acknowledgements.} This paper owes much to conversations with Bernard Derrida, who raised in particular the question whether models with an increasing number of scales can provide (possibly quantitative) insights into the fractal structure of the extremal process of BRW/branching Brownian motion. Unfortunately, our main result shows that this is not the case, at least as long as $\alpha< 1$. It is tempting to believe that letting $\alpha$ depend on the size of the system (i.e. $\alpha(N) \to 1, \, \text{as}\, N\uparrow \infty$) gives rise to more interesting extremal processes.

\section{Barriers, and the modified processes} \label{two}
The goal of this section is to construct the barrier $E_N$ to which we alluded in the introduction, and to give a proof of \eqref{claim_equality} and \eqref{claim_mean}. In a first step, we construct a barrier which is not  
"optimal", but which provides important {\it a priori} information:

\begin{lem}\label{uenvelope} Consider the barrier 
\[ 
U_{N}(k) \defi \beta_c k N^{1-\alpha}+\ln\left(N\right), \; k =0,..,N^\alpha. 
\]
It then holds: 
$$
\lim_{N\to \infty} \PP \left(S^{\sigma}_k \leq U_{N}(k) \quad\forall k\in\left\{1,..,N^\alpha\right\}, \sigma\in\Sigma_N\right)= 1 \,.
$$
\end{lem}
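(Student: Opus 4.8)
The plan is to establish the complementary estimate
\[
\PP\left(\exists\, \sigma\in\Sigma_N,\ \exists\, k\in\{1,\dots,N^\alpha\}:\ S^\sigma_k > U_N(k)\right) \longrightarrow 0 \qquad (N\to\infty)
\]
by a pure first moment (union bound) argument. The one point that requires care is \emph{not} to union over the $2^N$ configurations directly: since $S^\sigma_k$ depends on $\sigma$ only through the prefix $(\sigma_1,\dots,\sigma_k)$, and each coordinate takes $\exp(N^{1-\alpha}\log 2)=2^{N^{1-\alpha}}$ values, there are exactly $2^{kN^{1-\alpha}}$ distinct paths of length $k$, whereas the crude union over all configurations would already fail at $k=1$, the factor $2^N$ being far too large to be compensated by $\PP(S^\sigma_1>U_N(1))\approx 2^{-N^{1-\alpha}}$. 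Organising the bad event according to the first scale at which some path exceeds the barrier, one thus gets
\[
\PP\left(\exists\, \sigma,\ \exists\, k:\ S^\sigma_k > U_N(k)\right) \;\leq\; \sum_{k=1}^{N^\alpha} 2^{kN^{1-\alpha}}\,\PP\big(S^\sigma_k > U_N(k)\big),
\]
where in the $k$-th summand $\sigma$ denotes any fixed configuration.

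For each $k$ the variable $S^\sigma_k$ is centered Gaussian with variance $kN^{1-\alpha}$, so $\PP\big(S^\sigma_k>U_N(k)\big)\le \exp\big(-U_N(k)^2/(2kN^{1-\alpha})\big)$. Expanding $U_N(k)^2=(\beta_c kN^{1-\alpha})^2 + 2\beta_c kN^{1-\alpha}\ln N + (\ln N)^2$ and using $\beta_c^2=2\log 2$, the leading term produces a factor $2^{-kN^{1-\alpha}}$ that exactly cancels the number of paths of length $k$, the cross term produces a factor $N^{-\beta_c}$, and the last term produces a factor bounded by $1$. Hence each summand is at most $N^{-\beta_c}$ and
\[
\PP\left(\exists\, \sigma,\ \exists\, k:\ S^\sigma_k > U_N(k)\right) \;\leq\; N^\alpha\cdot N^{-\beta_c} \;=\; N^{\alpha-\beta_c}.
\]

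It then suffices to observe that $N^{\alpha-\beta_c}\to 0$, which is precisely where the hypotheses enter: $\alpha\in[0,1)$ together with $\beta_c=\sqrt{2\log 2}>1$ gives $\alpha-\beta_c<0$. (This is also why the additive term $\ln N$ appears in $U_N$: the union over the $N^\alpha$ scales costs a factor $N^\alpha$, which has to be beaten by the factor $N^{-\beta_c}$ coming from the intercept.) I do not expect any serious obstacle here: this is a soft a priori bound, needing neither a matching lower bound nor any second moment input, and the only thing to be careful about is to perform the union over prefixes scale by scale rather than over configurations, keeping track of the way the Gaussian exponent splits.
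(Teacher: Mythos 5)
Your proof is correct and follows essentially the same route as the paper: a union bound over the $2^{kN^{1-\alpha}}$ distinct prefixes at each scale $k$, a Gaussian tail estimate in which the entropy factor is cancelled by the leading term of $U_N(k)^2/(2kN^{1-\alpha})$ while the cross term yields $N^{-\beta_c}$, and the sum over the $N^\alpha$ scales is then beaten because $\beta_c>1>\alpha$. The only immaterial difference is that the paper retains the polynomial prefactor in the Gaussian tail bound, arriving at the slightly sharper estimate $N^{(3\alpha-1)/2-\beta_c}$ in place of your $N^{\alpha-\beta_c}$.
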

\begin{proof}
By Markov inequality, and simple counting, it holds: 
\beq \bea \label{example}
& \PP\left(\exists \sigma\in \Sigma_N:\; \sum_{i\leq j} X_{\sigma_1,..,\sigma_i}^{\left(i\right)}> U_N(j), \; \text{for some}\; j\leq N^\alpha \right) \\
& \hspace{3.5cm} \leq \sum_{j\leq N^\alpha} \exp\left( j N^{1-\alpha} \ln 2\right) \PP\left( 
\sum_{i\leq j} X_{1,\dots,1}^{\left(j\right)}> \beta_c jN^{1-\alpha} +  \ln N
 \right)\,.
\eea \eeq
By classical Gaussian estimates, the probability on the r.h.s. above is at most 
\[
\frac{\sqrt{jN^{1-\alpha}}}{\sqrt{2\pi}\left(\beta_c jN^{1-\alpha} +  \ln N\right)}\exp\left[ -\frac{\left(\beta_c jN^{1-\alpha} +  \ln N\right)^2}{2jN^{1-\alpha}}\right].
\]
Using this, and straightforward estimates, we get 
\[
\eqref{example} \leq \exp\left[ \left( \frac{3\alpha - 1}{2 }- \be_c \right) \ln N\right],
\]
which is evidently vanishing in the large $N$-limit, since $\frac{3\alpha-1}{2} < \beta_c $. 
\end{proof}

The above Lemma immediately implies that the weak limit of the extremal process $\Xi_N$ and the weak limit of the modified process $\Xi_{N, U_N}$ must necessarily coincide (provided one of the two exists). We now identify conditions under which this remains true for barriers which lie even lower than $U_N$.

\begin{lem}\label{eenvelope}
Consider a barrier $F_N$  with the following properties: 
\begin{itemize} 
\item[i)] $F_N \leq U_N$, i.e. $F_{N}(k)\leq U_{N}(k)$ for all  $k$;
\item[ii)] for $A\subset \R$ compact, it holds:
$$\lim_{N\to \infty} \E\left[ \Xi_{N, F_N}\left(A\right) \right] = \lim_{N\to \infty} \E\left[ \Xi_{N, U_N}\left(A\right) \right]$$
\end{itemize}
Then the weak limits of $\Xi_{N, F_N}$ and $\Xi_{N, U_N}$ coincide (provided one of the two exists). 
\end{lem}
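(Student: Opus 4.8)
The plan is to reduce the claim to a one-line first-moment estimate, exploiting that $F_N$ lies pointwise below $U_N$. The first step is to record the resulting monotonicity: since $F_N(k)\le U_N(k)$ for every $k$, the path-event $\{S^\sigma_k\le F_N(k)\ \forall\,k\le N^\alpha\}$ is contained in $\{S^\sigma_k\le U_N(k)\ \forall\,k\le N^\alpha\}$, so that, configuration by configuration, $\Xi_{N,F_N}$ is a sub-collection of $\Xi_{N,U_N}$. In particular $\Xi_{N,F_N}(A)\le\Xi_{N,U_N}(A)$ surely for every Borel $A\subset\R$, and for $A$ compact the difference $D_N(A):=\Xi_{N,U_N}(A)-\Xi_{N,F_N}(A)$ is a nonnegative, integer-valued random variable.

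The second step invokes hypothesis ii): for compact $A$ it gives $\E[D_N(A)]=\E[\Xi_{N,U_N}(A)]-\E[\Xi_{N,F_N}(A)]\to 0$ as $N\to\infty$. Since $D_N(A)$ is a nonnegative integer, Markov's inequality yields $\PP\big(\Xi_{N,F_N}(A)\neq\Xi_{N,U_N}(A)\big)=\PP\big(D_N(A)\ge 1\big)\le\E[D_N(A)]\to 0$. Hence on every compact set the two modified processes coincide with probability tending to one, and — by the first step — on that event they agree not merely in cardinality but as point measures on $A$.

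The third step upgrades this to equality of the weak limits. Given $f\in C_c^+(\R)$ with support contained in a compact $K$, on the event $\{\Xi_{N,F_N}(K)=\Xi_{N,U_N}(K)\}$ one has $\int f\,d\Xi_{N,F_N}=\int f\,d\Xi_{N,U_N}$, so that $\big|\,\E[e^{-\int f\,d\Xi_{N,F_N}}]-\E[e^{-\int f\,d\Xi_{N,U_N}}]\,\big|\le\PP(D_N(K)\ge 1)\to 0$. Since the law of a point process is determined by its Laplace functional on $C_c^+(\R)$ and weak convergence of point processes is equivalent to pointwise convergence of Laplace functionals \cite{kallenberg}, it follows that if either $\Xi_{N,U_N}$ or $\Xi_{N,F_N}$ converges weakly, then so does the other, to the same limit. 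I do not expect a genuine analytic obstacle in this lemma — it is a soft coupling plus first-moment argument — so the one point worth flagging is structural: everything must be phrased in terms of compact sets, because the limiting intensity $\mu$ has infinite mass near $-\infty$ and one cannot test on all of $\R$ at once; this is exactly why ii) is stated for compact $A$ and why $C_c^+(\R)$ is the appropriate class of test functions in the last step. The genuine work, of course, is the verification of ii) for the eventual (near-optimal) barrier $E_N$, carried out subsequently.
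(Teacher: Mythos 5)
Your proof is correct and follows essentially the same route as the paper: the crux in both is the first-moment bound $\PP\left(\Xi_{N, U_N}(A)\neq \Xi_{N, F_N}(A)\right)\leq \E\left[\Xi_{N, U_N}(A)-\Xi_{N, F_N}(A)\right]$, which you get directly from the monotonicity $\Xi_{N,F_N}(A)\le\Xi_{N,U_N}(A)$ plus Markov's inequality, whereas the paper derives the same inequality via a union bound over configurations. Your closing Laplace-functional step merely makes explicit the passage from high-probability equality on compacts to equality of the weak limits, which the paper leaves implicit.
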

\begin{proof} The Lemma steadily follows from the claim
\beq \bea \label{difference_mean}
& \PP\left(\Xi_{N, U_N}\left(A\right) = \Xi_{N, F_N}\left(A\right)\right) \geq 1-\E\left[\Xi_{N, U_N}\left(A\right)-\Xi_{N, F_N}\left(A\right)\right]. 
\eea \eeq
The proof of \eqref{difference_mean} is straightforward. Simple rearrangements and subadditivity imply that the for probability of the complementary event, it holds: 
\[ \bea
& \PP\left(\Xi_{N, U_N}\left(A\right) \neq \Xi_{N, F_N}\left(A\right)\right) \\
& =\PP\left(\exists \sigma\in\Sigma_N: X_\sigma-a_N \in A,\; \forall_{j =1 \dots N^\alpha}:\; S^\sigma_j \leq U_{N}(j) \; \mbox{ but } \exists_{j=1\dots N^\alpha}\, : S^\sigma_j > F_{j,N} \right) \\
& \leq \sum\limits_{\sigma\in\Sigma_N}  \PP\left(\exists \sigma\in\Sigma_N: X_\sigma-a_N\in A,\forall_{j =1 \dots N^\alpha} :\; S^\sigma_j \leq U_{N}(j) \; \mbox{ but } \exists_{j=1\dots N^\alpha}\, : S^\sigma_j > F_{j,N} \right) \\
& = 2^N \PP\left(X_\sigma-a_N \in A, \forall_{j =1 \dots N^\alpha}: \;S^\sigma_j \leq U_{N}(j) \; \mbox{ but } \exists_{j=1\dots N^\alpha}\, : S^\sigma_j > F_{j,N} \right) \\
& = 2^N \PP\left(X_\sigma-a_N \in A,\forall_{j =1 \dots N^\alpha}: \; S^\sigma_j \leq U_{N}(j)  \right)-2^N \PP\left(X_\sigma-a_N \in A, \forall_{j =1 \dots N^\alpha}: \; S^\sigma_j \leq F_{N}(j) \right) \\
& = \E\left[\Xi_{N, U_N}(A)-\Xi_{N, F_N}(A)\right].
\eea \]
Building the complement, \eqref{difference_mean} immediately follows. 
\end{proof}
By the previous Lemma, and in view of a proof of the main theorem, it is crucial to identify conditions for which the mean(s) of the modified process(es) converge to a finite limit. This is done by 

\begin{prop}\label{fstmoment}
Consider a barrier of the form $F_N = U_N + f _N$, where $f_N$ is such that 
\begin{itemize}
\item[i)] $f_{N}(0)=f_{N}(N^\alpha)=0$ \vspace{0.2cm}
\item[ii)] $\sup\limits_{k\in\left\{1,..,N^\alpha\right\}}|f_{N}(k)|= o\left(N^{\frac{1-\alpha}{2}}\right)$ for $N\uparrow \infty$.
\end{itemize}
For $A\subset \R$ compact, and $\mu$ as in Theorem \ref{main}, it holds: 
\[
\lim_{N\rightarrow\infty} \E\left[\Xi_{N, F_N}\left(A\right)\right] = \mu(A)\,.
\]
\end{prop}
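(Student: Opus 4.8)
\noindent\emph{Sketch of proof.} The plan is to evaluate the mean by linearity of expectation, condition on the endpoint of the path, and reduce the barrier constraint to a ballot‑type survival estimate for a Gaussian random‑walk bridge. Since the paths $(S^\sigma)_{\sigma\in\Sigma_N}$ are identically distributed,
\[
\E\big[\Xi_{N,F_N}(A)\big]=2^N\,\PP\Big(S^\sigma_{N^\alpha}-a_N\in A,\ S^\sigma_k\le F_N(k)\ \ \forall\,k\in\{1,\dots,N^\alpha\}\Big).
\]
Conditioning on $S^\sigma_{N^\alpha}=a_N+x$ and recalling that $S^\sigma_{N^\alpha}$ is centered Gaussian with variance $N$, the explicit form of $a_N$ together with $\beta_c^2=2\log 2$ gives, uniformly for $x$ in the compact set $A$,
\[
2^N\cdot\frac{1}{\sqrt{2\pi N}}\exp\!\Big(-\tfrac{(a_N+x)^2}{2N}\Big)=\frac{N^\alpha}{\sqrt{2\pi}}\,e^{-\beta_c x}\,(1+o(1));
\]
moreover, on this event the constraint at $k=N^\alpha$ is automatic, since $F_N(N^\alpha)=\beta_c N+\log N>a_N+\max_{y\in A}y$ for $N$ large (this uses $f_N(N^\alpha)=0$). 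Hence, writing $q_N(x):=\PP\big(S^\sigma_k\le F_N(k)\ \forall\,k<N^\alpha\mid S^\sigma_{N^\alpha}=a_N+x\big)$, one gets $\E[\Xi_{N,F_N}(A)]=\int_A\frac{N^\alpha}{\sqrt{2\pi}}\,e^{-\beta_c x}(1+o(1))\,q_N(x)\,dx$, and it remains to prove $N^\alpha q_N(x)\to 1$ uniformly for $x\in A$.

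For this I would use the bridge decomposition of the conditioned walk: under $\{S^\sigma_{N^\alpha}=a_N+x\}$ one has $S^\sigma_k\stackrel{d}{=}\frac{k}{N^\alpha}(a_N+x)+Z_k$, where $Z$ is the bridge of the unconditioned Gaussian walk, a centered Gaussian process with $Z_0=Z_{N^\alpha}=0$ and $\text{var}(Z_k)=N\,\tfrac{k}{N^\alpha}\big(1-\tfrac{k}{N^\alpha}\big)$. Substituting and using $F_N=U_N+f_N$ with the explicit forms of $a_N,U_N$, the event $\{S^\sigma_k\le F_N(k)\ \forall\,1\le k\le N^\alpha-1\}$ becomes $\{Z_k\le h_N(k)\ \forall\,1\le k\le N^\alpha-1\}$ with
\[
h_N(k)=\log N+f_N(k)+\frac{k}{N^\alpha}\Big(\frac{1+2\alpha}{2\beta_c}\log N-x\Big).
\]
Here the hypotheses on $f_N$ become transparent: the barrier is of order $\log N$ plus a term $f_N(k)$ of either sign, but since $\sup_k|f_N(k)|=o(N^{(1-\alpha)/2})$ and --- crucially --- $\alpha<1$ (so $\log N=o(N^{(1-\alpha)/2})$), the whole barrier is negligible on the scale $\sigma:=N^{(1-\alpha)/2}$ of a single increment of $S^\sigma$, i.e.\ $\sup_{1\le k\le N^\alpha-1}|h_N(k)|=o(\sigma)$ uniformly for $x\in A$.

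The heart of the matter is then the estimate $q_N(x)=\PP\big(Z_k\le h_N(k)\ \forall\,1\le k\le N^\alpha-1\mid Z_{N^\alpha}=0\big)=\frac{1+o(1)}{N^\alpha}$, uniformly for $x\in A$. I would prove it by squeezing $h_N$ between the constant barriers $\pm g_N$, $g_N:=\sup_{1\le k\le N^\alpha-1}|h_N(k)|=o(\sigma)$, and establishing for a Gaussian random‑walk bridge $Z$ over $n=N^\alpha$ steps, increment variance $\sigma^2$, pinned to $0$ at both ends, that $\PP\big(Z_k\le c\ \forall\,1\le k\le n-1\mid Z_n=0\big)=\frac{1+o(1)}{n}$ whenever $|c|=o(\sigma)$. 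For $c=0$ this equals \emph{exactly} $1/n$: conditionally on $Z_n=0$ the increments are exchangeable and the walk is $n$‑periodic, so among its $n$ cyclic rotations exactly one has its maximum over $\{0,\dots,n-1\}$ attained --- a.s.\ uniquely --- at time $0$, and that rotation is precisely the one realizing $\{Z_k<0\ \forall\,1\le k\le n-1\}$ (the Sparre--Andersen/cycle lemma). For $|c|=o(\sigma)$ one checks, via the standard renewal/fluctuation‑theory estimates --- the survival probability is $1/n$ times the renewal mass $V(c)$ of the ascending ladder heights (after an elementary shift and time reversal when $c<0$), and $V(c)=V(0)+O(c/\sigma)=1+o(1)$ since the first ladder height is of order $\sigma\gg|c|$ --- that raising or lowering the wall by $o(\sigma)$ perturbs the probability only by a factor $1+o(1)$. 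In particular $N^\alpha q_N(x)\le N^\alpha\PP(\max_{k\le n}Z_k\le g_N\mid Z_n=0)\le C$ uniformly in $x\in A$, which provides the domination below, while the other direction of the squeeze gives $N^\alpha q_N(x)\ge 1-o(1)$. (When $\alpha=0$ there are no interior constraints, $q_N\equiv 1=1/N^\alpha$, and the claim is trivial.)

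Combining the last estimate with the integral representation and invoking dominated convergence yields $\lim_{N\to\infty}\E[\Xi_{N,F_N}(A)]=\int_A e^{-\beta_c x}\,dx/\sqrt{2\pi}=\mu(A)$. The main obstacle is the bridge‑survival estimate: the order $N^{-\alpha}$ is soft --- it is the generic order for a bridge to stay one‑sided --- but pinning the constant to be exactly $1$, uniformly in $x\in A$, requires both the cycle‑lemma identity at the hard wall and a quantitative control of the $o(\sigma)$‑perturbation of the barrier, which is precisely where the three assumptions $f_N(0)=f_N(N^\alpha)=0$, $\sup_k|f_N(k)|=o(N^{(1-\alpha)/2})$ and $\alpha<1$ all enter.
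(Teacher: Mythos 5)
Your proposal is correct and follows essentially the same route as the paper: linearity of expectation plus conditioning on the endpoint, the Gaussian density asymptotics producing the factor $N^{\alpha}e^{-\beta_c x}/\sqrt{2\pi}$, the bridge decomposition with the recentered barrier uniformly $o\left(N^{(1-\alpha)/2}\right)$, the exact cycle-lemma identity giving probability $1/n$ for a zero wall, and a perturbation argument for walls of size $o(\sigma)$ --- precisely the content of Lemmas \ref{bbexa} and \ref{bbdif} in the Appendix. The only (minor) divergence is that you justify the wall-perturbation step by appealing to ladder-height/renewal fluctuation theory, whereas the paper proves the quantitative bound $c|\varepsilon|/n$ by induction on the length of the bridge; either suffices.
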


\begin{proof} By linearity of the expectation, and by conditioning on the "terminal event", 
\beq \bea \label{start}
& \E\left[\Xi_{N, F_N}\left(A\right)\right] = \\
& \qquad = 2^N \int\limits_A  \PP\left(\forall_{k\in\left\{1,..,N^\alpha\right\}}:\; S^\sigma_k \leq F_{N}(k)\;  \Big| X_\sigma-a_N =x\right)\PP\left(X_\sigma-a_N\in dx\right).
\eea \eeq
Let us  focus on the conditional probability: we first  write this as
\beq \bea \label{condprob}
& \PP\left(\forall_{k\in\left\{1,..,N^\alpha\right\}}:\; S^\sigma_k \leq F_{N}(k)\; \Big| X_\sigma-a_N=x\right)\\
&  \quad = \PP\left(\forall_{k\in\left\{1,..,N^\alpha\right\}}:\; S^\sigma_k - \frac{k}{N^\alpha}X_\sigma \leq F_{N}(k) -\frac{k}{N^\alpha}\left(a_N+x\right) \; \Big| X_\sigma-a_N =x\right)\,.
\eea \eeq
Inspection of the covariances shows that the Gaussian vector $\left(S_k^\s -\frac{k}{N^\alpha} X_\s, k=1\dots N^\alpha\right)$ is, in fact,  {\it independent} of $X_\s$. Using this, and rescaling by  $N^{-\frac{1-\alpha}{2}}$ yields
\beq 
\eqref{condprob} = \PP\left(\; \forall_{ k\in\left\{1,..,N^\alpha\right\}}:\; N^{-\frac{1-\alpha}{2}}\left[S^\sigma_k - \frac{k}{N^\alpha}S^\sigma_{N^\alpha}\right] \leq N^{-\frac{1-\alpha}{2}}\left[F_{N}(k) -\frac{k}{N^\alpha}\left(a_N+x\right)\right] \right).
\eeq
Again by inspection of the covariances, one immediately realizes that the law of the Gaussian vector 
$\left(N^{-\frac{1-\alpha}{2}}\left[S^\sigma_k - \frac{k}{N^\alpha}S^\sigma_{N^\alpha}\right], k = 0 \dots N^\alpha \right)$
is that of a (discrete) Brownian bridge of lifespan $N^\alpha$, starting and ending in $0$. To lighten notations, 
let $\left(B_{N^\alpha}(k), k \leq N^\alpha\right)$ be such a Brownian bridge, and shorten
\[
\widetilde F_{N}(k, x) \defi N^{-\frac{1-\alpha}{2}}\left[F_{N}(k) -\frac{k}{N^\alpha}\left(a_N+x\right)\right]\,.
\]
It thus holds: 
\[
\eqref{condprob} = \PP\left(\forall_{ k\in\left\{1,..,N^\alpha\right\}}:\; B_{N^\alpha}(k) \leq \widetilde F_{N}(k, x)\right)\,.
\] 
One immediately checks that within our choice of the barrier $F_N$, and since $\alpha < 1$ strictly, 
\[
\lim_{N\uparrow \infty} \sup_{k\leq N^\alpha, x \in A} \Big| \widetilde F_{N}(k, x) \Big| = 0\,.
\]
in which case it follows from the Lemmata in the Appendix that
\beq \bea \label{bbbridge}
\PP\left(\forall_{ k\in\left\{1,..,N^\alpha\right\}}:\; B_{N^\alpha}(k) \leq  \widetilde F_{N}(k, x) \right) & = \PP\left(
\; \forall_{ k\in\left\{1,..,N^\alpha\right\}}:\; B_{N^\alpha}(k) \leq 0 \right)(1+o(1)) \\
& = N^{-\alpha} \left(1+o(1)\right), 
\eea \eeq
uniformly for $x$ in compacts, and for $N\uparrow \infty$. Plugging this into \eqref{start}, we have 
\[
\E\left[\Xi_{N, F_N}\left(A\right)\right] = 2^N N^{-\alpha} \left(1+o(1)\right) \int\limits_A  \PP\left(X_\sigma-a_N\in dx\right)\,.
\]
The claim of the Proposition then immediately follows by  straightforward estimates on the Gaussian density.
\end{proof}

We can finally specify our choice of the barrier $E_N$ alluded to in the introduction. The optimal choice is (by far) not unique, and depends on an additional free parameter $\gamma$. The only requirement is that
\beq \label{requirement_gamma}
0<\gamma<\frac{1-\alpha}{2}\,.
\eeq
With any $\gamma$ satisfying \eqref{requirement_gamma}, and $U_N$ as in Lemma \ref{uenvelope}, we set
\beq\label{barri} 
E_{N}(k) \defi U_N(k) -N^{\gamma}\mathbf{1}_{k\neq 0, N^\alpha}
\eeq 

This choice of a barrier clearly satisfies the assumptions of Proposition \ref{fstmoment} and also Lemma \ref{eenvelope}. This has two fundamental consequences: first, the weak limit of the modified process $\Xi_{N, E_N}^{(\alpha)}$ and that of extremal process $\Xi_N^{(\alpha)}$ must necessarily coincide (provided one of the two exists); second, the mean of the modified process converges to the alleged limit, i.e. \eqref{claim_mean} holds with $E_N$ as a barrier. Theorem \ref{main} will thus follow as soon as we prove that  avoidance functions \eqref{claim_avoidance} also converge with the very same choice for the barrier. This will be done in the next section. Before that, we shall briefly comment on the choice \eqref{barri} of the barrier. (The discussion is intentionally informal: for details, the reader is referred e.g. to \cite{kistler}.) By Lemma \ref{uenvelope}, the path of extremal configurations (the process $k \mapsto S_k^\s$ for $\s$ s.t. $X_\s \approx a_N$) must necessarily satisfy the "$U_N$-barrier condition". As we have seen in Proposition \ref{fstmoment}, conditioning onto the terminal event turns the path into a Brownian bridge which is required to stay below $0$ during its lifespan. It is well known that in order to achieve this, the bridge will behave within good approximation as the path of its modulus, $k \mapsto - \mid S_k^\s \mid$, which is typically much lower than the shift 
$- N^\gamma \mathbf{1}_{k\neq 0, N^\alpha}$ for $\gamma < (1-\alpha)/2$ (this is the so-called entropic repulsion, see e.g. \cite{abk}). 
In other words, requiring that the paths stay below $E_N$ is no stricter requirement than asking them to stay below $U_N$. On the other hand, restricting the analysis on configurations whose paths stay below $E_N$ 
forces the expected number of correlated extremal pairs 
to vanish in the large $N$-limit: this stands crucially behind the Chen-Stein method which we implement below.

\section{Convergence of the avoidance functions}
The goal of this section is to prove \eqref{claim_avoidance}, which we recall reads 
\beq \label{claim_avoidance_bis}
\lim_{N\to \infty} \PP\left( \Xi_{N, E_N}(A) = 0 \right) = \PP\left( \Xi(A) = 0 \right), 
\eeq
where $E_N$ is given by \eqref{barri}, $A$ is any compact set,  and $\Xi$ is a Poisson point process with density $\mu(A) = \int_A e^{-\be_c x} dx /\sqrt{2\pi}$. To do so, we will use the so-called Chen-Stein method \cite[Theorem 1A]{barbour}. We begin with a warm-up computation. In what follows, we write $\mathcal E_N(\s)$ for the event that a configuration $\s$ satifies the "$E_N$-barrier condition", more precisely:
\[
\mathcal E_N(\s) \defi \left\{\omega \in \Omega: S^\sigma_k(\omega) \leq E_{N}(k),  k = 1\dots N^\alpha \right\}.
\]
Recall that for two configurations $\s, \tau \in \Sigma_N^{(\alpha)}$, we denote by $\s \wedge \tau$ their overlap, 
namely the first scale at which the two configurations do not coincide. 

\begin{lem}[Extremal pairs]\label{secmoment}
Let $A\subset\R$ be compact. With the above notations, it holds: 
\[
\E\left[\sharp\left\{\sigma,\tau:\;  \sigma\wedge\tau \neq 0,N^\alpha, \; \text{and}\;  X_\sigma-a_N \in A, \mathcal E_N(\s); \,  X_\tau-a_N \in A , \mathcal E_N(\tau) \right\}\right]=o\left(1\right), 
\]
as $N\to \infty$.
\end{lem}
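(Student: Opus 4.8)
The plan is to bound the expectation by summing over the overlap value $q \defi \s\wedge\tau \in \{1,\dots,N^\alpha-1\}$ and, for each fixed $q$, counting the pairs and estimating the joint probability. For fixed $q$, the number of ordered pairs $(\s,\tau)$ with $\s\wedge\tau = q$ is at most $2^N \cdot \exp\big((N^\alpha - q)N^{1-\alpha}\log 2\big)$, since $\s$ ranges over $\Sigma_N$ and, once $\s$ is fixed, $\tau$ must agree with $\s$ on the first $q$ scales and is otherwise free (with the extra constraint $\tau_{q+1}\neq\s_{q+1}$, which only helps). Writing $\s\wedge\tau = q$ means the two paths $S^\s$ and $S^\tau$ coincide up to scale $q$ and then evolve independently; in particular $X_\s = S^\s_q + \widehat X_\s$ and $X_\tau = S^\s_q + \widehat X_\tau$ where $\widehat X_\s, \widehat X_\tau$ are independent given $S^\s_q$, each Gaussian with variance $(N^\alpha - q)N^{1-\alpha} = N - qN^{1-\alpha}$.

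The next step is to bound the joint probability
\[
\PP\Big( X_\s - a_N \in A,\ \mathcal E_N(\s);\ X_\tau - a_N \in A,\ \mathcal E_N(\tau) \Big)
\]
by dropping most of the barrier events and keeping only what is needed. A convenient choice is to keep the barrier constraint $S^\s_q = S^\tau_q \leq E_N(q) \leq U_N(q) = \be_c q N^{1-\alpha} + \log N$ at the branch scale, together with the two terminal constraints $X_\s, X_\tau \in a_N + A$. Conditioning on the common value $S^\s_q = s$, the events $\{X_\s \in a_N + A\}$ and $\{X_\tau \in a_N + A\}$ become independent, each of probability at most $C \cdot \exp\big[-(a_N + \inf A - s)^2 / (2(N - qN^{1-\alpha}))\big] / \sqrt{N - qN^{1-\alpha}}$ times the length of $A$, by the Gaussian density bound. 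Integrating against the law of $s$ (Gaussian, variance $qN^{1-\alpha}$) over the range $s \leq \be_c q N^{1-\alpha} + \log N$, and optimizing, the Gaussian integral is dominated by $s$ near its constrained maximum; a Laplace-type estimate then gives, after multiplying by the pair count $2^N \exp\big((N^\alpha - q)N^{1-\alpha}\log 2\big)$, a bound of the form $C N^{c} \exp(-\be_c \log N \cdot (\text{something} > 0))$ — i.e. a negative power of $N$ — for each $q$, uniformly, with a gain that is summable in $q$. Summing over the at most $N^\alpha$ values of $q$ still leaves a vanishing quantity because each term already carries a negative power of $N$ with room to spare; one should check that the $q$-dependence of the exponent is monotone enough (worst case at $q$ close to $0$ or close to $N^\alpha$) so that the sum is controlled by its largest term times $N^\alpha$.

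The main obstacle is the bookkeeping of the competition, for each $q$, between the entropy factor $\exp\big((N^\alpha - q)N^{1-\alpha}\log 2\big)$ coming from the number of pairs and the doubled Gaussian cost of having both $X_\s$ and $X_\tau$ large: one must verify that at \emph{every} intermediate overlap $q$ the Gaussian cost wins, with a surplus that is a genuine negative power of $N$ (this is exactly where the choice of $a_N$ with the prefactor $1+2\alpha$, and the $\log N$ slack in $U_N$, enter). The barrier at scale $q$ is essential: without the constraint $S^\s_q \leq \be_c q N^{1-\alpha} + O(\log N)$, the branch point could sit anomalously high and the two independent ascents to level $a_N$ would become cheap, destroying the estimate — this is precisely the point of working with the modified process. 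Once the per-$q$ bound is a negative power of $N$ uniformly in $q$, summing over $q \leq N^\alpha$ and over the (finitely many, by symmetry) orderings of $\s,\tau$ is routine and yields $o(1)$.
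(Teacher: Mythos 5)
Your overall skeleton matches the paper's proof: sum over the overlap value $K=\sigma\wedge\tau$, bound the pair count by $2^{2N-KN^{1-\alpha}}$, condition on the common trunk value at the branch scale, use independence of the two legs after the branch point, and keep only the branch-scale barrier constraint together with the two terminal constraints. The genuine gap is in the single step you yourself flag as ``the main obstacle'' and then do not resolve: you relax the branch-scale constraint from $E_N(K)$ to $U_N(K)=\beta_c K N^{1-\alpha}+\ln N$ and assert that a Laplace-type estimate then yields a negative power of $N$ per overlap value. It does not. At the constrained optimum (trunk sitting at the barrier) the exponential terms cancel \emph{exactly}: the entropy $(2N-KN^{1-\alpha})\ln 2$ is offset precisely by the trunk cost $KN^{1-\alpha}\ln 2$ plus twice the leg cost $(N-KN^{1-\alpha})\ln 2$. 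What survives are the subleading corrections, and both corrections you invoke work \emph{against} you: the $-\frac{1+2\alpha}{2\beta_c}\log N$ in $a_N$ makes each leg cheaper (a factor of order $N^{(1+2\alpha)/2}$ per leg), and the $+\ln N$ slack in $U_N$ lets the trunk sit higher, gaining roughly another $N^{\beta_c}$. With only the $U_N$-level constraint, the per-$K$ bound is therefore polynomially \emph{large} (the paper's cruder version of the same computation gives an exponent $\beta_c\cdot 3\ln N$), and the sum over $K$ diverges.

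The lemma is rescued precisely by the ingredient you discarded: the event $\mathcal E_N(\sigma)$ is defined through $E_N(k)=U_N(k)-N^{\gamma}\mathbf{1}_{k\neq 0,N^\alpha}$ with $0<\gamma<\frac{1-\alpha}{2}$, so the trunk at scale $K$ is forced to lie $N^\gamma$ \emph{below} $U_N(K)$. Carrying this through, the paper bounds the exponent by $\beta_c\left(3\ln N-N^\gamma+x-2a\right)$, so each overlap value contributes a factor $e^{-\beta_c N^\gamma}$, which is stretched-exponentially small and beats both the polynomial corrections and the $N^\alpha$ values of $K$ at once. In other words, the gain per overlap is not a negative power of $N$ extracted at the $U_N$ level; it is the superpolynomial gain from the $N^\gamma$ depression of $E_N$ -- this is exactly why the modified process was introduced. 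To repair your argument, keep $S^\sigma_K\leq E_N(K)$ (not $U_N(K)$) in the conditioning and propagate the $-N^\gamma$ into the exponent; with that change the rest of your outline goes through essentially as in the paper.
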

It follows from Lemma \ref{secmoment} that energies of extremal configurations are, in fact, {\it independent} random variables. It will come hardly as a surprise that this feature stands behind the onset of the Poisson point process in the large $N$-limit.
\begin{proof}[Proof of Lemma \ref{secmoment}]
By linearity of the expectation, and re-arranging the ensuing sum according to the possible overlap-values, it holds: 
\beq \bea \label{secmomsum}
& \E\left[\sharp\left\{\sigma,\tau:\;  \sigma\wedge\tau \neq 0,N^\alpha, \; \text{and}\;  X_\sigma-a_N \in A, \mathcal E_N(\s); \,  X_\tau-a_N \in A , \mathcal E_N(\tau) \right\}\right]\\
& \qquad = \sum\limits_{K=1}^{N^\alpha-1} \#\left\{\left(\sigma,\tau\right)|\sigma\wedge\tau =K\right\}\PP\left(X_\sigma-a_N\in A, \mathcal E_N\left(\sigma\right),\; X_\tau-a_N\in A, \mathcal E_N\left(\tau\right)\right)
\eea \eeq
Let us focus on the probability on the r.h.s. above: since $\s$ and $\tau$ coincide up to scale $K$, by conditioning 
on the "trunk" which is shared by $\s$ and $\tau$, we get 
\beq \label{trunk}
\PP\left(X_\sigma-a_N\in A, \mathcal E_N\left(\sigma\right); X_\tau-a_N\in A, \mathcal E_N\left(\tau\right)\right) =\int\limits_{-\infty}^{E_{K,N} } (P) \times \PP\left(S^\sigma_K \in dx\right), 
\eeq 
where
\[
(P) \defi \PP\left(x+\left(S^\sigma_{N^\alpha}-S^\sigma_K\right)-a_N\in A, \mathcal E_N\left(\sigma\right); \; x+\left(S^\tau_{N^\alpha}-S^\tau_K\right)-a_N \in A, \mathcal E_N\left(\tau\right)\Big| S^\sigma_K = x\right). 
 \]
On the event appearing in $(P)$ we drop the $\mathcal E$-requirements: by independence of the paths after the "branching point", this  leads to 
\beq \label{trunk_two}
\eqref{trunk} \leq \int\limits_{-\infty}^{E_{K,N} }\PP\left(x+\left(S^\sigma_{N^\alpha}-S^\sigma_K\right)-a_N\in A\right)^2 \PP\left(S^\sigma_K \in dx\right).
\eeq
This steadily implies that the r.h.s. of  \eqref{trunk_two} is at most
\beq \bea \label{2ndmomestim} 
& \int\limits_{-\infty}^{E_{K,N} } \left[\int\limits_{A+a_N-x} \exp\left(-\frac{z^2}{2N^{1-\alpha}\left(N^\alpha-K\right)} \right) dz\right]^2  \exp\left( -\frac{x^2}{2N^{1-\alpha}K}\right) dx  \\
&  \leq 2^{-2N + KN^{1-\alpha}}\lambda\left(A\right) \times \\
& \qquad \times \sup\limits_{a\in A} \int\limits_{-\infty}^{0} \exp\left( -\frac{\left(a_N^{(\alpha)}-E_{K,N}-x+a\right)^2}{N^{1-\alpha}\left(N^\alpha-K\right)}-\frac{\left(x+E_{K,N}\right)^2}{2N^{1-\alpha}K} +\left(2N - KN^{1-\alpha}\right) \ln 2  \right)dx, 
\eea \eeq
where $\lambda$ denotes Lebesgue measure.  The argument of the exponential in \eqref{2ndmomestim} is easily seen to be bounded by $\beta_c\left(3\ln N- N^\gamma +x-2a\right)$, hence
\beq \bea \label{bleah}
\eqref{trunk} & \leq  2^{-2N + KN^{1-\alpha}} \lambda\left(A\right) \sup\limits_{a\in A} \int\limits_{-\infty}^{0} \exp\Bigg[ \beta_c\left(3\ln N-N^\gamma+x-2a\right)\Bigg] dx \\
& \leq 2^{-2N + KN^{1-\alpha}} C_{A} \exp\Big[ \beta_c\left(3\ln N - N^\gamma  \right)\Big],
\eea \eeq
with $C_A \defi \frac{1}{\beta_c}\lambda\left(A\right) \exp\big[-2\beta_c \inf\left\{A\right\}\big]$. Plugging \eqref{bleah} into \eqref{secmomsum}, and using that 
\[
\#\left\{\left(\sigma,\tau\right)|\sigma\wedge\tau =K\right\} \times 2^{-2N + KN^{1-\alpha}} \leq 1, 
\] 
we get 
\[
\eqref{secmomsum} \leq \sum\limits_{K=1}^{N^\alpha-1} C_{A} \exp\Big[ \beta_c\left(3\ln N - N^\gamma  \right)\Big],
\]
which is evidently vanishing in the large $N$-limit. 
\end{proof}

We can now finally move to the last missing piece, namely a proof of convergence of the avoidance functions \eqref{claim_avoidance_bis}. As mentioned, the main technical device here will be the so-called Chen-Stein method, \cite[Theorem 1A]{barbour}. To implement this, 
we need to introduce some notation. For compact $A\subset \R$, we shorten 
\[
\mu_N(A) \defi \E\left[ \Xi_{N, E_N}(A) \right]\, 
\]
and denote by $\mathcal L_{N}(A)$ the {\it law} of the random variable $\Xi_{N, E_N}(A)$. 
For a (sigma-finite) measure $\nu$ on $\R$, we denote by $\text{Pois}_{\nu(A)}$ the law of a Poisson random variable with mean $\nu(A)$. For $\rho, \rho' \in \mathcal M_1(\R)$ two probability measures on $\R$ we denote by 
$d_{TV}(\rho, \rho')$ their distance in total variation. In order to closely stick to the notation in  \cite{barbour}, we write 
\[
\Xi_{N, E_N}(A) =  \sum_{\sigma\in\Sigma_N^{(\alpha)}} I_{\sigma},  \quad 
I_\s \defi \boldsymbol{1}_{\{X_\sigma-a_N \in A, \mathcal E_N(\s)\}}, 
\]
and define, for given $\s\in \Sigma_N$, 
\[
Z_\s \defi  \sum\limits_{\tau\in\Sigma_N,\tau\wedge\sigma\neq 0,N^\alpha} I_\sigma \,.
\]
For a last piece of notation, we shorten $p_\s \defi \E[I_\s]$. 

Coming back to our main task of proving  \eqref{claim_avoidance_bis}, with $\mu(A)$ as in Theorem \ref{main}, it holds: 
\beq \bea \label{triangle}
&\Big| \PP\left( \Xi_{N, E_N}(A) = 0 \right) - \PP\left( \Xi(A) = 0 \right) \Big| \leq d_{TV}\left( \mathcal L_N(A), \text{Pois}_{\mu(A) }\right) \\
& \qquad \leq d_{TV}\left( \mathcal L_N(A), \text{Pois}_{\mu_N(A)} \right) + 
d_{TV}\left( \text{Pois}_{\mu_N(A)}, \text{Pois}_{\mu(A)} \right) 
\eea \eeq
The convergence of $\mu_N(A)$ towards $\mu(A)$ is guaranteed by Proposition \ref{fstmoment}; in virtue of simple properties of Poisson random variables, this convergence implies that  the second term on the r.h.s. above vanishes in the limit of large $N$.  Concerning the first term on the r.h.s. of \eqref{triangle}: the Chen-Stein method \cite[Theorem 1A]{barbour} yields the bound 
\beq \bea \label{chen}
d_{TV}\left( \mathcal L_N(A), \text{Pois}_{\mu_N(A)} \right) & \leq \sum_{\sigma\in\Sigma_N} \Big( p_\sigma^2 + p_\sigma \E\left[Z_\sigma\right]+ \E\left[I_\sigma Z_\sigma\right] \Big) 
\eea \eeq
Since for any $\s\in \Sigma_N$, $p_\s = 2^{-N} \mu_N(A)$, one immediately gets
\beq \label{chen_one}
\sum_{\sigma\in\Sigma_N}  p_\sigma^2 = 2^{-N} \mu_N(A)^2,
\eeq
and by simple counting, 
\beq \label{chen_two}
\sum_{\sigma\in\Sigma_N} p_\sigma \E\left[Z_\sigma\right] \leq 2^{-N^{1-\alpha}} \mu_N(A)^2. 
\eeq
Plugging \eqref{chen_one} and \eqref{chen_two} in \eqref{chen} we get 
\beq \bea \label{chen_three}
d_{TV}\left( \mathcal L_N(A), \text{Pois}_{\mu_N(A)} \right) & 
\leq 2^{-N} \mu_N(A)^2 + 2^{-N^{1-\alpha}} \mu_N(A)^2 + \sum_{\s \in \Sigma_N^{(\alpha)}} \E\left[I_\sigma Z_\sigma\right] \\
& = 2^{-N} \mu_N(A)^2 + 2^{-N^{1-\alpha}} \mu_N(A)^2 + \sum_{\s \wedge \tau\neq 0,N^\alpha} \E\left[I_\sigma I_\tau \right], 
\eea \eeq 
the last equality by definition of $Z_\s$. Since $\mu_N(A)$ converges to a finite limit (by Proposition \ref{fstmoment}), the first two terms in the last display of \eqref{chen_three} vanish in the limit of large $N$; the third term is exactly what was analyzed in Lemma \ref{secmoment}, and therefore also vanishing.  All in all, \eqref{triangle} is  vanishing,  hence \eqref{claim_avoidance_bis} holds and the proof of Theorem \ref{main} is concluded. 

\section*{Appendix}
A fundamental ingredient in the proof of Theorem \ref{main} are the estimates \eqref{bbbridge} on Brownian bridge probabilities appearing in the proof of Proposition \ref{fstmoment}; these are somewhat classical \cite{willy}, sometimes going under the name of "ballot theorems". For the reader's convenience, we give here a short proof of the estimates as needed in our framework.  

\begin{lem} \label{bbexa}
Let $\left(\Delta_i\right)_{i\in\left\{0,..,n-1\right\}}$ be i.i.d random variables having a density with respect to the Lebesgue measure and $(B_n(j), j\in\{1,..,n\})$ the related {\it bridge}, i.e.
$$ B_{n}(j) \defi \sum\limits_{i=0}^{j-1} \Delta_i - \frac{j}{n} \sum\limits_{i=0}^{n-1} \Delta_i,$$
then it holds:
\beq \label{lesszero}
 \PP\left( B_n(j) \leq 0 \text{ for all } j\in \left\{1,..,n\right\}\right)= \frac{1}{n}
\eeq
\end{lem}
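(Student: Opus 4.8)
The plan is to prove the identity \eqref{lesszero} via a symmetry/exchangeability argument of the "cycle lemma" (or classical ballot-theorem) type, which will reduce the continuous statement to a pure counting fact about cyclic permutations of the increments. First I would record the key observation that the event $\{B_n(j) \le 0 \text{ for all } j \in \{1,\dots,n\}\}$ depends on the increments $(\Delta_0,\dots,\Delta_{n-1})$ only through the partial sums of their \emph{centered} versions $\widetilde\Delta_i \defi \Delta_i - \frac1n\sum_{\ell=0}^{n-1}\Delta_\ell$, whose total sum over a full period is zero; indeed $B_n(j) = \sum_{i=0}^{j-1}\widetilde\Delta_i$. So without loss of generality I may work with a sequence of exchangeable random variables summing to $0$, and I want to show that the probability that all $n$ partial sums are $\le 0$ equals $1/n$.

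The main step is the cyclic-shift argument. For a fixed realization of $(\widetilde\Delta_0,\dots,\widetilde\Delta_{n-1})$ with $\sum_i \widetilde\Delta_i = 0$ and with all partial sums distinct (which holds almost surely, since the $\Delta_i$ have a density — this is where absolute continuity enters, to rule out ties), consider the $n$ cyclic shifts $(\widetilde\Delta_r, \widetilde\Delta_{r+1},\dots,\widetilde\Delta_{r+n-1})$, indices mod $n$, for $r = 0,\dots,n-1$. The classical cycle lemma asserts that \emph{exactly one} of these $n$ cyclic shifts has the property that all its partial sums are $\le 0$ — namely the shift that starts just after the index at which the original partial sum sequence attains its (unique) maximum. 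Since the law of $(\Delta_0,\dots,\Delta_{n-1})$ is i.i.d., hence invariant under cyclic shifts, and since the centering map commutes with cyclic shifts, the $n$ events $E_r \defi \{$all partial sums of the $r$-th cyclic shift are $\le 0\}$ all have the same probability; as they are almost surely disjoint and their union has probability $1$, each has probability $1/n$. Taking $r=0$ gives exactly \eqref{lesszero}.

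I would then just need to make the cycle-lemma step clean: given distinct partial sums $\s_0 = 0, \s_1, \dots, \s_{n-1}$ of the centered increments (and $\s_n = 0 = \s_0$), let $m$ be the unique index in $\{0,\dots,n-1\}$ maximizing $\s_m$; one checks directly that starting the cycle at $m+1$ makes every partial sum $\s_{m+1+k}-\s_{m} \le 0$, while starting at any other index $r$ produces at least one strictly positive partial sum (the one reaching $\s_m$). The only place demanding care — and the part I expect to be the main (minor) obstacle — is handling the almost-sure genuineness of the maximizer: I must argue that, because the $\Delta_i$ are jointly absolutely continuous, the centered partial sums $\s_1,\dots,\s_{n-1}$ are almost surely pairwise distinct and distinct from $0$, so that "$\le 0$" versus "$<0$" issues and ties at the maximum occur only on a null set and do not affect the probability. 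Everything else is exchangeability plus a deterministic combinatorial identity, so the proof is short.
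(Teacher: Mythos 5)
Your argument is correct and is essentially the paper's own proof: the paper likewise uses the almost-sure uniqueness of the bridge's maximizer (from the density assumption), the invariance of the increments' law under cyclic permutations, and the fact that exactly one cyclic shift places the maximum at the origin, yielding probability $1/n$. Your write-up merely spells out the cycle-lemma step and the a.s.\ distinctness of the partial sums a bit more explicitly; no substantive difference.
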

\begin{proof}
We refer to $\left(\Delta_i\right)_{i\in\left\{0,..,n-1\right\}}$ as {\it increments}. The event in \eqref{lesszero} 
is equivalent to the maximum of the bridge being lower than zero. Let $m\in \{0,..,n-1\}$ be the position of the maximum; remark that this is almost surely unique by the density-assumption. One steadily checks that applying a cyclic permutation, say $\pi$, to the increments of the bridge, shifts the position of the maximum to $\pi^{-1}m$. There is one cyclic permutation only, say $\hat{\pi}$, which shifts the position of the maximum to the origin, i.e. for which $\hat{\pi}^{-1}m=0$. On the other hand, the distribution of the bridge is not affected by any permutation, hence $\hat \pi$ must be uniformly distributed among the $n$ possible cyclic permutations: since the event in \eqref{lesszero} is equivalent to $\hat{\pi}$ being the identity, the Lemma follows. 
\end{proof}

In other words, the probability that a discrete bridge stays below zero during its lifetime decays as the inverse of the length of the bridge. On the other hand, since our bridges have "square-root fluctuations", one expects that whether the bridge is required to stay below zero or below a straight line shouldn't alter (much)
the asymptotic behavior of these probabilities. This is indeed the case: 

\begin{lem}\label{bbdif}
Let $\left(B_n(j), j= \dots n\right)$ be a (discrete) Brownian bridge of length $n$. Then there exists $c>0$ independent of $n$ such that for any $n$ and $|\varepsilon|\leq c^{-1}$, it holds: 
\[
\Big| \PP\left(B_n(j )\leq 0,  j = 1 \dots n-1 \right)-\PP\left(B_n(j)\leq \varepsilon,   j = 1 \dots n-1 \right) \Big| \leq  c\frac{|\varepsilon|}{n}.
\]
\end{lem}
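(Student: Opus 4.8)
\textbf{Proof proposal for Lemma \ref{bbdif}.}

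The plan is to compare the two bridge events by a direct coupling of increments. Let $(\Delta_i)_{i=0}^{n-1}$ be the i.i.d.\ Gaussian increments of the bridge $B_n$, and observe that for a fixed real $\varepsilon$ the event $\{B_n(j)\leq \varepsilon\ \text{for all}\ j=1\dots n-1\}$ differs from $\{B_n(j)\leq 0\ \text{for all}\ j\}$ only on the symmetric difference, so the quantity to bound is $\PP\big(B_n(j)\leq\varepsilon\ \forall j,\ \text{and}\ B_n(j)>0\ \text{for some}\ j\big)$ (and the analogous term with $\varepsilon<0$, which is handled by the same argument after replacing $\varepsilon$ by $|\varepsilon|$ and reflecting). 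In words: I must show that the probability that the running maximum of the bridge lands in the thin slab $(0,\varepsilon]$ is $O(|\varepsilon|/n)$.

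The key step is the cyclic-permutation idea already used in Lemma \ref{bbexa}. As there, let $M=\max_{0\le j\le n-1} B_n(j)$ and let $m$ be the (a.s.\ unique) location of the maximum; cyclically shifting the increments so that $m$ moves to the origin does not change the law of the bridge. Hence
\[
\PP\big(0<M\le\varepsilon\big)=n\,\PP\big(0<M\le\varepsilon,\ m=0\big)=n\,\PP\big(0<\Delta_0-\tfrac{1}{n}\textstyle\sum_i\Delta_i\le\varepsilon,\ B_n(0)\ \text{is the strict max}\big),
\]
and crucially on the event $\{m=0\}$ the value $M$ equals $B_n(0)$, wait — $B_n(0)=0$ by definition, so one should instead index the maximum over $j\in\{1,\dots,n\}$ with $B_n(n)=0$; I will set it up so that the maximizing coordinate, once shifted to be the \emph{first} interior step, makes $M$ equal to the single increment-type quantity $B_n(1)=\Delta_0-\frac1n\sum_i\Delta_i$, which has a bounded density $g$ on $\R$ uniformly in $n$ (it is centered Gaussian with variance $1-1/n\le 1$, so $g\le 1/\sqrt{2\pi}$). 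Therefore $\PP(0<M\le\varepsilon)\le n\cdot\PP(0<B_n(1)\le\varepsilon)\le n\cdot |\varepsilon|\cdot\sup g\le \tfrac{|\varepsilon|}{\sqrt{2\pi}}\cdot n$ — but this is off by a factor $n^2$, so the naive bound is too lossy and the permutation argument must retain the ``$B_n(1)$ is the maximum'' constraint.

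To get the right power of $n$ I keep that constraint: writing $c_0=\sup g$,
\[
\PP(0<M\le\varepsilon)=n\,\PP\big(0<B_n(1)\le\varepsilon,\ B_n(1)\ge B_n(j)\ \forall j\big)
= n\int_0^{\varepsilon} \PP\big(B_n(j)\le t\ \forall j\ \big|\ B_n(1)=t\big)\,g(t)\,dt .
\]
Conditionally on $B_n(1)=t$, the path $(B_n(j))_{j\ge 1}$ is (a shifted affine transform of) a Brownian bridge of length $n-1$, so the conditional probability above is at most the probability that a length-$(n-1)$ bridge stays below its right endpoint value, which by (the proof of) Lemma \ref{bbexa} is $O(1/n)$ uniformly in $t$ — more precisely it equals $\tfrac{1}{n-1}$ plus a correction of the same order coming from the slack $t\le\varepsilon\le c^{-1}$, which I absorb by choosing $c$ large. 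Hence the integral is bounded by $\tfrac{C}{n}\,c_0\,|\varepsilon|$ and multiplying by the prefactor $n$ gives $\PP(0<M\le\varepsilon)\le C c_0 |\varepsilon|$ — still one factor of $n$ too many. The resolution is that the conditional probability is genuinely $O(1/n^2)$, not $O(1/n)$: demanding that an interior coordinate be \emph{the} maximum (not merely $\le$ the endpoint) of a discrete bridge costs an extra $1/n$, exactly as in the ballot-theorem heuristic $\PP(\text{max at a prescribed interior site})\asymp n^{-2}$. I will prove this sharper conditional bound by one more cyclic-permutation/time-reversal step applied to the two bridge-halves on either side of site $1$.

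The main obstacle, then, is bookkeeping this double application of the cyclic-permutation trick cleanly: one permutation fixes the location of the maximum, and a second argument (splitting at that location and using Lemma \ref{bbexa} on each half, or a reversal) produces the extra $1/n$. Everything else — the Gaussianity of increments entering only through the uniform density bound $c_0$, the reduction $|\varepsilon|\mapsto$ slab width, and the reflection handling $\varepsilon<0$ — is routine. I expect the final constant $c$ to be chosen so that both $|\varepsilon|\le c^{-1}$ makes the slab thin enough for the density bound to be used as a flat bound, and large enough to absorb the $O(1/n^2)$ correction term into $c|\varepsilon|/n$.
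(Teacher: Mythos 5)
There is a genuine gap, and it sits exactly at the step you flag as the ``main obstacle''. The cyclic-permutation identity you invoke, $\PP\left(0<M\leq\varepsilon\right)=n\,\PP\left(0<M\leq\varepsilon,\ \text{argmax}=1\right)$, is not valid. In Lemma \ref{bbexa} the cyclic shift works because the event under consideration is a function of the \emph{location} of the maximum only: shifting the increments by $k$ re-anchors the path, $B_n'(j)=B_n(j+k)-B_n(k)$, so the argmax location moves but the \emph{value} of the maximum changes by $-B_n(k)$. Consequently the event $\left\{0<M\leq\varepsilon\right\}$ is not exchangeable with the argmax position, and $\PP\left(0<M\leq\varepsilon,\ \text{argmax}=m\right)$ genuinely depends on $m$ (it is of order $\varepsilon/n$ for $m$ near the endpoints and much smaller for $m$ in the bulk). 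Indeed, your own computation exposes the inconsistency: with the (false) identity, the correct ballot-type estimate $\PP\left(B_n(j)\leq t\ \forall j\mid B_n(1)=t\right)\asymp 1/n$ would give $\PP\left(0<M\leq\varepsilon\right)\asymp\varepsilon$, contradicting the lemma. Your proposed repair, that this conditional probability is ``genuinely $O(1/n^2)$'', is false: at $t=0$ it equals exactly $1/(n-1)$ by Lemma \ref{bbexa} (given $B_n(1)=t$, the remaining path is a bridge of length $n-1$ required to stay below its starting level), and it remains of order $1/n$ for small $t>0$. So the missing factor of $n$ cannot be recovered along this route, and the ``double cyclic-permutation'' step you defer is not a bookkeeping issue but the point where the argument breaks.

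For comparison, the paper proves the lemma by induction on the bridge length: it bounds $\PP\left(B_n(j)\leq\varepsilon\ \forall j,\ \exists i:B_n(i)>0\right)$ by a union over the crossing index $i$, conditions on $B_n(i)=x\in[0,\varepsilon]$, uses the Markov property to split the path into two bridges of lengths $i$ and $n-i$, replaces $x$ by $0$ via monotonicity, applies the inductive hypothesis together with the exact ballot probability of Lemma \ref{bbexa} to each half, bounds the Gaussian density of $B_n(i)$ by $\varepsilon/\sqrt{2\pi\,i(1-i/n)}$, and sums $\left[i(n-i)\right]^{-3/2}$ to obtain $c\,\varepsilon/n$. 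A cycle-lemma proof is not impossible in principle (one can show $\PP\left(0<M\leq\varepsilon\right)=\sum_{k=1}^{n-1}\PP\left(B_n(j)\leq 0\ \forall j,\ B_n(k)\in[-\varepsilon,0)\right)$ and then estimate each summand), but that requires local estimates for the bridge conditioned to stay negative, none of which appear in your proposal; as written, the argument does not establish the stated bound.
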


\begin{proof}
We proceed by induction on the length of the bridge. \\

\noindent {\sf Base case.} For $n=2$, it clearly holds: 
\[
\PP\left(B_n(j) \leq \varepsilon,  j\in\left\{1\right\},\exists j\in\left\{1\right\}: B_n(j)>0\right) = \PP\left(B_n(1) \in \left[0,\varepsilon\right]\right)\leq \frac{2}{\sqrt{\pi}}\frac{|\varepsilon|}{n} \quad \text{ for } \varepsilon>0.
\]
\[
\PP\left(B_n(j) \leq 0,  j\in\left\{1\right\},\exists j\in\left\{1\right\}: B_n(j)>\varepsilon\right) = \PP\left(B_n(1) \in \left[\varepsilon,0\right]\right)\leq \frac{2}{\sqrt{\pi}}\frac{|\varepsilon|}{n} \quad \text{ for } \varepsilon<0.
\]
The proof in the cases $\varepsilon>0$ and $\varepsilon<0$ are similar, we thus consider only the first case. \\

\noindent {\sf Induction step.}   For  $n\geq 3$, assume the claim is true for all $k \leq n-1$. By Markov inequality, 
\beq \bea \label{ind_one}
& \PP\left(B_n(j) \leq \varepsilon,  j =1 \dots n-1 \; \text{but}\;  \exists i=1 \dots n-1 : B_n(i)>0\right) \\
& \leq \sum\limits_{i=1}^{n-1}\PP\left(B_n(j)\leq \varepsilon, j =1 \dots n-1 \; \text{but}\; B_i>0\right) \\
& = \sum\limits_{i=1}^{n-1}\int\limits_{0}^{\varepsilon} \PP\left(B_n(j) \leq \varepsilon,\;  j =1 \dots n-1 \Big| \; B_n(i) = x \right)\PP\left(B_n(i) \in dx\right).
\eea \eeq
By the Markov property of Brownian bridges, the conditional probability above reads 
\[ \bea \label{markov_p}
& \PP\left(\forall_{ j\in\left\{1,..,i\right\}}:\;  B_n(j) \leq \varepsilon  \Big|\; B_n(i) = x \right)\PP\left(\forall_{  j\in\left\{i,..,n-1\right\} }\; B_n(j)\leq \varepsilon \Big|B_n(i) = x \right) \\
& \qquad \leq \PP\left(\forall_{ j\in\left\{1,..,i\right\}}:\;  B_n(j) \leq \varepsilon  \Big|\; B_n(i) = 0 \right)\PP\left(\forall_{  j\in\left\{i,..,n-1\right\} }\; B_n(j)\leq \varepsilon \Big|B_n(i) = 0 \right)
\eea \]
where the inequality follows by monotonicity in $x$. Using this, and Gaussian estimates, 
\[ \bea 
&\eqref{ind_one}\leq \sum\limits_{i=1}^{n-1}\PP\left(\forall_{j\in\left\{1,..,i\right\}}:B_n(j)\leq \varepsilon|B_n(i) = 0 \right) \times \\
&\hspace{3.5cm}\times\PP\left(\forall_{j\in\left\{i,..,n-1\right\}}:\; B_n(j)\leq \varepsilon|B_n(i) = 0 \right) \frac{\varepsilon}{\sqrt{2\pi \left(i-\frac{i^2}{n}\right)}}.
\eea \]
Given that $B_n(i) = 0$, the process $\left(B_n(j), j=1 \dots n\right)$  is a Brownian bridge of length $i \leq n-1$; analogously,  the second probability involves a Brownian bridge of length $n-i$. The assumption therefore applies, and the above is at most 
\beq \bea \label{last}
& \sum\limits_{i=1}^{n-1}\left(\PP\left(\forall_{j\in\left\{1,..,i\right\}}:B_i(j)\leq 0 \right)+c\frac{\varepsilon}{i}\right)\times \\
& \hspace{3.5cm} \times\left(\PP\left(\forall_{j\in\left\{1,..,n-i\right\}}:B_{n-i}(j)\leq 0 \right)+c\frac{\varepsilon}{n-i}\right) \frac{\varepsilon}{\sqrt{2\pi \left(i-\frac{i^2}{n}\right)}}.
\eea \eeq
It then holds:
\[\bea
\eqref{last} & \leq \varepsilon\frac{4\sqrt{n}}{\sqrt{2\pi}} \sum\limits_{i=1}^{n-1} \left(\frac{1}{i\left(n-i\right)}\right)^{3/2}\leq \varepsilon\frac{8\sqrt{n}}{\sqrt{2\pi}} \sum\limits_{i=1}^{\lfloor \frac{n}{2} \rfloor} \left(\frac{1}{i\left(n-i\right)}\right)^{3/2} 
\leq c\frac{\varepsilon}{n}\,,
\eea \]
the last inequality using the bound $[i(n-i)]^{-3/2} \leq [i (n/2)]^{-3/2}$, for $i\leq \lfloor n/2 \rfloor$.
\end{proof}


\begin{thebibliography}{1} 
\bibitem{addario} L. Addario-Berry and B. Reed,  {\it Minima in branching random walks},  Ann. of Prob. 37, 1044-1079 (2009)
\bibitem{aidekon} E. Aidekon, {\it Convergence in law of the minimum of a branching random walk}, Ann. of Prob. 41, 362-1426 (2013)
\bibitem{aidekon_et_al} E. Aidekon, J. Berestycki, E. Brunet, and Z. Shi. {\em The branching Brownian motion seen from its tip}, Probab. Theor. Rel. Fields, 157, 405-451  (2013).
\bibitem{abk} L.-P. Arguin, A. Bovier, and N. Kistler,  {\em Genealogy of extremal particles of branching Brownian motion}, Comm. Pure and Appl. Math. 64, 1647-1676 (2011).
\bibitem{abk_three} L.-P. Arguin, A. Bovier, and N. Kistler, {\em The extremal process of branching Brownian motion}, Probab. Theor. Rel. Fields 157, 535-574 (2013).
\bibitem{barbour} A.D. Barbour, L. Holst, and S. Janson, {\it Poisson Approximation}, Oxford: Clarendon Press, (1992).
\bibitem{bovier_hartung} A. Bovier and L. Hartung, {\em The extremal process of two-speed branching Brownian motion}, Elect. J. Probab. 19, No. 18, 1-28 (2014)
\bibitem{bovier_hartung_two} A. Bovier and L. Hartung, {\em Variable speed branching Brownian motion 1. Extremal processes in the weak correlation regime}, arXiv e-prints (2014)
\bibitem{bovier_hartung_three} A. Bovier and L. Hartung, {\em Extended Convergence of the Extremal Process of Branching Brownian Motion}, arXiv e-prints (2014).
\bibitem{BovierKurkova_I} A. Bovier, I. Kurkova. {\it Derrida's generalized random energy models. 1. Models with finitely many hierarchies}. Ann. Inst. H. Poincare. Prob. et Statistiques (B) Prob. Stat. 40, 439-480 (2004).
\bibitem{bramson} M. Bramson, {\em  Maximal displacement of branching Brownian motion}, Comm. Pure Appl. Math. 31, 531-581 (1978) .
\bibitem{Derrida_REM} B. Derrida, {\it Random-energy model: An exactly solvable model of disordered systems},  Physical Review B 24.5 2613 (1981)
\bibitem{brunet_derrida_two} E. Brunet and B. Derrida, {\em A branching random walk seen from the tip}, Jour. Stat. Phys. 143,  420-446 (2010).
\bibitem{derrida_spohn} B. Derrida and H. Spohn, {\em Polymers on disordered trees, spin glasses, and travelling waves},  Jour. Stat. Phys. 51, 817-840 (1988).
\bibitem{fang_zeitouni} M. Fang, and O. Zeitouni, {\em Slowdown for time inhomogeneous branching Brownian motion}, Jour. Stat. Phys. 149, 1-9 (2012).
\bibitem{willy} W. Feller, {\em An Introduction to Probability Theory and its Applications}, Vol. I, Wiley (1968)
\bibitem{kallenberg} O. Kallenberg, {\it Random measures}, Akademie-Verlag, Berlin (1983).
\bibitem{kistler} N. Kistler, {\it Derrida's random energy models. From spin glasses to the extremes of correlated random fields}, Springer Lecture Notes, to appear (2015+)
\bibitem{lalley_sellke} S.P. Lalley and T. Sellke, {\em A conditional limit theorem for the frontier of a branching Brownian motion}, Ann. of Prob, 15, 1052-1061 (1987).
\bibitem{Leadbetter} M. R. Leadbetter, G. Lindgren, and H. Rootzen. {\it Extremes and related properties of random sequences and processes}. Vol. 21. New York: Springer-Verlag (1983).
\bibitem{madaule} T. Madaule, {\it Convergence in law for the branching random walk seen from its tip}, arXiv e-prints (2011).
\bibitem{mckean} H.P. McKean, {\em Application of Brownian Motion to the equation of Kolmogorov-Petrovskii-Piskunov}, Comm.Pure. Appl. Math. 28: 323-331 (1976).
\bibitem{parisi} M. Mezard, M.A. Virasoro, and G. Parisi, {\it Spin glass theory and beyond}. World scientific (1987).
\end{thebibliography}
\end{document}